\def\Z{{\mathbb Z}}
\def\Q{{\mathbb Q}}
\def\OO{{\mathcal O}}
\def\FF{\mathcal{F}}
\def\II{\mathcal{I}}
\def\l{\underline{{\textit{l}}}}
\def\tensor{{\otimes}}
\def\Pic0{\mathrm{Pic}^0}
\def\max{\mathrm{max}}
\def\la{{\langle}}
\def\ra{{\rangle}}
\theoremstyle{plain}
\newtheorem{theorem}{Theorem}[section]
\newtheorem{proposition/example}[theorem]{Proposition/Example}
\newtheorem{definition/theorem}[theorem]{Definition/Theorem}
\newtheorem{proposition}[theorem]{Proposition}
\newtheorem{corollary}[theorem]{Corollary}
\theoremstyle{definition}
\newtheorem{definition}[theorem]{Definition}
\newtheorem{remark}[theorem]{Remark}
\newtheorem{conjecture/question}[theorem]{Conjecture/Question}
\newtheorem{remark/definition}[theorem]{Remark/Definition}
\newtheorem{notation/assumptions}[theorem]{Assumptions/Notation}
\numberwithin{equation}{section}
\theoremstyle{remark}
\begin{document}  

\title{
Higher order embeddings via
the basepoint-freeness threshold 
}

\author{Federico Caucci}
  \address{Dipartimento di Matematica e Informatica, Universit\`a di Ferrara, Via Machiavelli 30, 44121 Ferrara, Italy} 
 \email{{\tt federico.caucci@unife.it}}

\maketitle

\setlength{\parskip}{.1 in}

\begin{abstract} 
In this note, we relate the basepoint-freeness threshold of a polarized  abelian variety, introduced by 
Jiang and Pareschi, with  $k$-jet very ampleness. Then,
we derive several applications of this fact, including a criterion for the $k$-very ampleness of Kummer varieties.  
 \end{abstract}

\section{Introduction}  

Let $X$ be a complex projective variety. A line bundle $L$ on $X$ is said to be \emph{$k$-jet very ample}, $k \geq 0$, if the evaluation map
\[
H^0(X, L) \rightarrow H^0(X, L \otimes \OO_X / \II_{x_1}  \cdots \II_{x_{k+1}})
\]
is surjective for any, not necessarily distinct, $k+1$ points $x_1, \ldots, x_{k+1}$ in $X$. In particular, $0$-jet very ample means that $L$ is globally generated, and $1$-jet very ample means that $L$ is very ample. When $k \geq 2$, $k$-jet very ampleness is in general stronger than the related notion of \emph{$k$-very ampleness}, which takes into account $0$-dimensional subschemes $Z \subseteq X$ of length $k + 1$ asking that the evaluation maps
$$H^0(X, L) \rightarrow H^0(X, L \otimes \OO_Z)$$ 
are surjective.
 These and related conditions were introduced by Beltrametti,
Francia and Sommese in \cite{befrso}, as possible 
higher analogues of very ampleness.

In the first part of this short note, we show that, when $X = A$ is an abelian variety, the Jiang-Pareschi basepoint-freeness threshold 
 \cite{jipa} influences the higher order embeddings of $L$. Namely, we have:
\begin{theorem}\label{thm1}
Let $(A, \l\, )$ be a polarized abelian variety. If 
\begin{equation}\label{beta0}
\beta(A, \l\, ) < \frac{1}{k+1},
\end{equation}
then $L \otimes N$ is $k$-jet very ample, for any ample line bundle $L$ representing the polarization $\l$ and any nef line bundle $N$ on $A$.
\end{theorem}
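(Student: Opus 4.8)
The plan is to translate $k$-jet very ampleness into a single cohomological vanishing and then feed it into the generic-vanishing machinery attached to $\beta$, with the two inequalities $x>\beta$ and $(k+1)x<1$ playing complementary roles. Fix $k+1$ points $x_1,\dots,x_{k+1}\in A$ (possibly coinciding or infinitely near) and let $W$ be the length-$(k+1)$ subscheme cut out by $\II_W=\II_{x_1}\cdots\II_{x_{k+1}}$. Since $M:=L\otimes N$ is ample, $H^i(A,M)=0$ for $i>0$, so from the sequence $0\to M\otimes\II_W\to M\to M\otimes\OO_W\to 0$ the evaluation map $H^0(A,M)\to H^0(A,M\otimes\OO_W)$ is surjective if and only if $H^1(A,M\otimes\II_W)=0$. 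Thus it suffices to produce this vanishing for every such $W$, uniformly.

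Because $\beta(A,\l\,)<\tfrac1{k+1}$, I can fix a rational $x$ with $\beta(A,\l\,)<x<\tfrac1{k+1}$, so that simultaneously $x>\beta$ and $y:=(k+1)x<1$. By the very definition of the basepoint-freeness threshold, $x>\beta$ guarantees that the $\Q$-twisted ideal sheaf $\II_0\langle x\l\rangle$ is $\mathrm{IT}_0$, hence continuously globally generated. Since $\mathrm{IT}_0$ and continuous global generation are preserved by translation and by twists by $\mathrm{Pic}^0$ (both notions quantify over all $P_\alpha$), each translate $\II_{x_i}\langle x\l\rangle$ is again $\mathrm{IT}_0$ and continuously globally generated. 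On the other hand $y<1$ forces the leftover class $(1-y)\l+c_1(N)$ to be ample, as $\l$ is ample and $N$ is nef; the corresponding ample $\Q$-twist is therefore continuously globally generated as well.

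The heart of the argument is to propagate positivity to the product. Since the multiplication of ideals gives a surjection $\II_{x_1}\otimes\cdots\otimes\II_{x_{k+1}}\twoheadrightarrow\II_W$, twisting by $y\,\l$ exhibits $\II_W\langle y\l\rangle$ as a quotient of $\bigotimes_{i}\II_{x_i}\langle x\l\rangle$. Each factor is continuously globally generated, so by the multiplicativity of this property under tensor products (Pareschi--Popa) their tensor product is continuously globally generated, and hence so is the quotient $\II_W\langle y\l\rangle$; should one need continuous rather than ordinary global generation of the product, it suffices to divert a small multiple of the ample slack $(1-y)\l$ equally into the factors, keeping each exponent in the interval $(\beta,\tfrac1{k+1})$. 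It remains to pair $\II_W\langle y\l\rangle$ with the complementary ample twist of class $(1-y)\l+c_1(N)$, whose sum with $y\l$ is exactly the integral class $\l+c_1(N)$ of $M$: by the Pareschi--Popa criterion for jets --- if $\II_W\otimes M_1$ is continuously globally generated and $M_2$ is an ample (hence continuously globally generated) factor with $M=M_1\otimes M_2$, then $H^0(A,M)\to H^0(A,M\otimes\OO_W)$ is surjective --- we obtain the desired surjectivity, i.e. $H^1(A,M\otimes\II_W)=0$. As $W$ was arbitrary, $M=L\otimes N$ is $k$-jet very ample.

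The main obstacle is the middle step: propagating continuous global generation through the product of ideals $\II_W=\prod\II_{x_i}$ rather than through a bare tensor product, making positivity survive both the passage to the quotient and --- crucially --- the possible collision of the $x_i$, where $W$ degenerates to a fat point and $\II_W$ to a high power of a single maximal ideal. This is precisely where the budget inequality is used twice: $x>\beta$ buys continuous global generation for each individual twisted ideal, while $(k+1)x<1$ leaves a strictly ample remainder to absorb into the Pareschi--Popa criterion. Equally delicate is the bookkeeping of the $\Q$-twisted/isogeny formalism, which must be arranged so that $\mathrm{IT}_0$ and continuous global generation are genuinely preserved under the operations above; and one should verify at the outset that the reduction to the single vanishing $H^1(A,M\otimes\II_W)=0$ indeed captures $k$-jet very ampleness for all admissible configurations, including the non-reduced ones.
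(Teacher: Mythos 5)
Your route is genuinely different from the paper's --- the paper never touches ideal sheaves of points, while you attempt a direct Pareschi--Popa-style argument --- but it has a real gap at exactly the step you yourself flag as ``the heart of the argument.'' The reduction to $H^1(A, M\otimes \II_W)=0$ is fine, and so is the observation that $x>\beta(A,\l\,)$ makes each translate $\II_{x_i}\la x\l\, \ra$ an $IT(0)$, hence continuously globally generated, $\Q$-twisted sheaf. What is \emph{not} available is the claimed ``multiplicativity of continuous global generation under tensor products (Pareschi--Popa)'': every preservation statement of this kind --- for CGG, for $M$-regularity, for $IT(0)$, and their $\Q$-twisted versions (\cite[Proposition 3.4]{ca}, \cite[Proposition 3.1]{papoIII}) --- requires all but one factor to be \emph{locally free}, and $\II_{x_i}$ is not locally free once $\dim A\geq 2$. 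A tensor product of non-locally-free $IT(0)$ (or CGG) sheaves need not be $IT(0)$ (or CGG), and the failure concentrates precisely at the collision locus you worry about, where $\bigotimes_i \II_{x_i}$ acquires Tor sheaves supported at the fat point; redistributing the ample slack $(1-y)\l$ among the factors changes the twists but cannot manufacture local freeness. The problem then compounds downstream: $IT(0)$ does \emph{not} descend along the surjection $\bigotimes_i\II_{x_i}\twoheadrightarrow \II_W$ (for $0\to K\to \FF\to \GG\to 0$, the vanishing of $H^1(\FF)$ controls $H^1(\GG)$ only together with $H^2(K)$; only CGG passes to quotients), while the vanishing you ultimately need, $H^1(A,M\otimes \II_W)=0$ at the \emph{trivial} element of $\Pic0 A$, cannot be extracted from mere CGG of $\II_W\la y\l\, \ra$ plus an ample complement: the Pareschi--Popa mechanisms yield global generation of $\II_W\otimes M$, or statements after twisting by a \emph{general} $\alpha\in\Pic0 A$, neither of which gives the $H^1$-vanishing at $\alpha=0$ that surjectivity of $H^0(A,M)\to H^0(A,M\otimes\OO_A/\II_W)$ requires. (A minor symptom of the same issue: at a collision the colength of $\II_x^{k+1}$ is much larger than $k+1$, so $W$ is not a length-$(k+1)$ scheme; harmless for your reduction, but a sign the degenerate case is not under control.)

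This is exactly why the paper routes through the locally free syzygy bundle instead: the Ein--Lazarsfeld--Yang criterion (Theorem~\ref{elyjet}) trades the multi-point condition, collisions included, for the single asymptotic vanishing $H^1(A, M_{L_d}^{\tensor (k+1)}\tensor P)=0$ for $d\gg 0$; since $M_{L_d}$ \emph{is} locally free, \cite[Theorem D]{jipa} gives $IT(0)$ of $M_{L_d}\la \frac{1}{d(k+1)}\l_d\ra$ as soon as $\beta(A,\l\,)<\frac{d}{1+d(k+1)}$, and the preservation-of-vanishing principle then applies legitimately to the $(k+1)$-st tensor power, tensored with the nef $N$. To salvage your direct approach you would have to prove $IT(0)$ (or $M$-regularity) of $\II_W\la y\l\, \ra$ itself, including the fat-point case $\II_W=\II_x^{k+1}$; that is substantive new content --- it is essentially what Ito establishes in \cite{itoM} by different techniques, and what Pareschi--Popa's $m\geq k+2$ result \cite{papoII} achieves by induction with Pontryagin-product methods --- not a formal consequence of multiplicativity of CGG.
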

\noindent We refer the reader to \S \ref{S2} for the definition and main properties of $\beta(A, \l\, )$. Theorem \ref{thm1} (partially) recovers the well-known result of Bauer-Szemberg \cite{basz} saying that the tensor power of $m$ ample line bundles on an abelian variety
 is $k$-jet very ample, when $m \geq k+2$ (see also \cite[Theorem 3.8]{papoII} for a more general result). 
See \S \ref{S2} for some other consequences which follow from Theorem \ref{thm1}.
Let us  just point out here that it can be applied to \emph{any} ample line bundle satisfying \eqref{beta0}, in particular to primitive ones.

In the latter part of the paper, 
we investigate higher order embeddings of a Kummer variety $K(A)$, that is the quotient  of an abelian variety $A$ by the action of the inverse morphism $A \ni p \mapsto -p$. It is known that an ample line bundle on $K(A)$ is globally generated, and that a square of it is very ample (indeed, it is projectively normal by Sasaki \cite{sa}).
We generalize these facts as follows.
\begin{theorem}\label{thm2}
If $L$ is an ample line bundle on $K(A)$ and $$m > \frac{k+1}{2},$$ then $L^{\otimes m} \otimes N$ is $k$-very ample, for any nef line bundle $N$ on $K(A)$.
\end{theorem}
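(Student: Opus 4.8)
The plan is to pull everything back along the double cover $\pi\colon A\to K(A)$ and exploit the eigenspace decomposition under the involution $\sigma=[-1]$. Write $\mathcal{L}=L^{\otimes m}\otimes N$ on $K(A)$ and $\mathcal{M}=\pi^*\mathcal{L}=\pi^*(L^{\otimes m})\otimes\pi^*N$ on $A$. Since $\pi$ is the quotient by $\sigma$, the bundle $\mathcal{M}$ carries a canonical $\sigma$-linearization acting trivially on the fibres over the fixed ($2$-torsion) points, so $\mathcal{M}$ is totally symmetric; in particular $\pi^*L\cong H^{\otimes 2}$ for a symmetric ample $H$ on $A$ by Mumford's theory, and $H^0(K(A),\mathcal{L})=H^0(A,\mathcal{M})^+$ is the $\sigma$-invariant part. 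The first input I would record is the behaviour of the basepoint-freeness threshold under these operations: by the homogeneity and the general bound recalled in \S\ref{S2}, $\beta\big(A,\mathrm{cl}(\pi^*L)\big)=\tfrac12\,\beta\big(A,\mathrm{cl}(H)\big)\le\tfrac12$, and hence $\beta\big(A,\mathrm{cl}(\pi^*(L^{\otimes m}))\big)=\tfrac1m\,\beta\big(A,\mathrm{cl}(\pi^*L)\big)\le\tfrac1{2m}$.

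The hypothesis $m>(k+1)/2$ is exactly $\tfrac1{2m}<\tfrac1{k+1}$, so $\beta\big(A,\mathrm{cl}(\pi^*(L^{\otimes m}))\big)<\tfrac1{k+1}$. Applying Theorem \ref{thm1} to the ample class $\mathrm{cl}(\pi^*(L^{\otimes m}))$ together with the nef bundle $\pi^*N$, I obtain that $\mathcal{M}$ is $k$-jet very ample on $A$. It then remains to descend this to $k$-very ampleness of $\mathcal{L}$ on $K(A)$. Fixing a length-$(k+1)$ subscheme $Z\subset K(A)$, the goal is surjectivity of $H^0(A,\mathcal{M})^+\to H^0\big(\mathcal{O}_{\pi^{-1}(Z)}\otimes\mathcal{M}\big)^+$, equivalently the vanishing of the $\sigma$-invariant part $H^1\big(A,\mathcal{I}_{\pi^{-1}(Z)}\otimes\mathcal{M}\big)^+$. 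Away from the branch locus one has $\pi^{-1}(Z)=Z'\sqcup\sigma(Z')$, a disjoint union of two length-$(k+1)$ schemes interchanged by $\sigma$, whose invariant sections are identified with $H^0(\mathcal{O}_{Z'}\otimes\mathcal{M})$ of length $k+1$; the essential point is that only the invariant obstruction must vanish, matching a length-$(k+1)$ condition on $A$ rather than the length-$(2k+2)$ condition governing the full group $H^1$.

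The crux, and the step responsible for the factor-$2$ improvement over the naive bound $m>k+1$, is precisely this passage to the invariant part: one must show that $k$-jet very ampleness of $\mathcal{M}$ forces $H^1\big(A,\mathcal{I}_{\pi^{-1}(Z)}\otimes\mathcal{M}\big)^+=0$ even though the full $H^1$ need not vanish. This is \emph{not} a formal consequence of the length-$(k+1)$ surjectivity, since $k$-jet very ampleness controls $Z'$ and $\sigma(Z')$ only one half at a time; the real content is an equivariant surjectivity statement, using that $\mathcal{M}$ is a $\sigma$-totally-symmetric pullback and that an invariant section is determined on $\sigma(Z')$ by its restriction to $Z'$, so that the two halves impose only $k+1$ independent and automatically compatible conditions. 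I expect this equivariant control of the invariant eigenpart to be the main obstacle. The remaining technical difficulty is the behaviour at branch points: when $Z$ meets the image of a $2$-torsion point the fibre acquires ramification and the clean splitting $Z'\sqcup\sigma(Z')$ fails, so I would treat such configurations separately, either by a direct analysis of the invariant jets at the fixed points or by a semicontinuity argument reducing $k$-very ampleness to the generic unramified configurations, where the eigenspace bookkeeping above applies.
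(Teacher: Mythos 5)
Your opening steps are correct: $\pi^*L$ is totally symmetric, hence a square $H^{\otimes 2}$, so $\beta\bigl(A,\mathrm{cl}(\pi^*(L^{\otimes m}))\bigr)\le \frac{1}{2m}<\frac{1}{k+1}$, and Theorem \ref{thm1} then makes $\mathcal{M}=\pi^*(L^{\otimes m}\otimes N)$ $k$-jet very ample on $A$. But the proof is incomplete exactly at the point you yourself flag as the ``main obstacle'': nothing in the proposal actually establishes the invariant-part vanishing $H^1\bigl(A,\mathcal{I}_{\pi^{-1}(Z)}\otimes\mathcal{M}\bigr)^+=0$. The heuristic that the two halves $Z'$ and $\sigma(Z')$ ``impose only $k+1$ automatically compatible conditions'' is a count of conditions, not a surjectivity argument: to produce an invariant section with prescribed values on $Z'$ by symmetrizing, one needs a section controlled on all of $Z'\cup\sigma(Z')$, a length-$(2k+2)$ scheme, which is precisely the $(2k+1)$-order statement you are trying to avoid and would force the naive bound $m>k+1$. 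So the factor-$2$ gain is asserted, not proved. Moreover, your fallback for configurations meeting the branch locus fails as stated: the rank of the evaluation map $H^0(\mathcal{L})\to H^0(\mathcal{O}_Z\otimes\mathcal{L})$ is only lower semicontinuous in $Z$, so surjectivity for \emph{generic} (unramified) $Z$ does not propagate to the special configurations through $2$-torsion images --- semicontinuity runs in the wrong direction, and those degenerate $Z$ are the dangerous ones.

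The paper avoids descending a geometric property altogether, and this is where the two routes genuinely diverge. It applies Agostini's cohomological criterion (Theorem \ref{agocrit}), valid on the possibly singular $K(A)$: it suffices to show $H^1(K(A), M_{L_d}^{\otimes(k+1)}\otimes L^{\otimes m}\otimes N)=0$ for $d\gg 0$, where $M_{L_d}$ is the kernel bundle on $K(A)$ itself, built from the sections $H^0(K(A),L_d)$ --- i.e.\ from the invariant sections upstairs; the equivariance you struggle with is thereby absorbed into the choice of kernel bundle (note $\pi^*M_{L_d}$ is \emph{not} $M_{\pi^*L_d}$). Since $\mathcal{O}_{K(A)}$ is a direct summand of $\pi_*\mathcal{O}_A$, this $H^1$ injects into $H^1\bigl(A,\pi^*(M_{L_d}^{\otimes(k+1)}\otimes L^{\otimes m}\otimes N)\bigr)$, which is rewritten as $\bigl(\pi^*M_{L_d}\la \frac{m}{d(k+1)}\pi^*\l_d\ra\bigr)^{\otimes(k+1)}\otimes\pi^*N$; by \cite[Proof of Lemma 5.4]{caK} the $\Q$-twisted sheaf $\pi^*M_{L_d}\la \frac{m}{d(k+1)}\pi^*\l_d\ra$ is $IT(0)$ once $(k+1)d+m<2md$, which for $d\gg 0$ is exactly $2m>k+1$ --- the factor $2$ enters through the divisibility of $\pi^*\l$ (your observation $\pi^*L\cong H^{\otimes 2}$) fed into an $IT(0)$ criterion for the pulled-back kernel bundle, not through eigenspace bookkeeping --- and preservation of vanishing concludes. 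To repair your argument you would need to prove the equivariant surjectivity statement from scratch; the cohomological route makes it unnecessary, which is presumably why the paper (and why Theorem \ref{thm2} claims only $k$-very ampleness, the strength of Agostini's criterion, rather than the $k$-jet very ampleness your descent would aim for).
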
 
\noindent This complements 
our previous 
result  on syzygies of Kummer varieties \cite{caK}.

Proofs rely on some general criteria of Ein-Lazarsfeld-Yang \cite{einlazyang}, and Agostini \cite{ag}.
More precisely, in \cite{einlazyang, ag},  the authors,  with an approach built on \cite{eilagon} and \cite{yang} which in turn is inspired by the work of Voisin \cite{vo}, noted  that the \emph{asymptotic vanishing} of  certain cohomology groups  are related to the $k$-(jet) very ampleness of line bundles (see Theorems \ref{elyjet} and \ref{agocrit}). We produce sufficient conditions to get these asymptotic vanishings on abelian and Kummer varieties, so to derive information on their higher order embeddings.

\subsection*{Acknowledgements.  } Theorem \ref{thm1} is part of my Ph.D.\ thesis, supervised by Beppe Pareschi. I thank him for all his teachings.  I am also grateful  to  the referee for remarks improving Proposition \ref{posmult}.
 
\subsection*{Notations. }
We work (for simplicity) over the complex numbers. Let $X$ be a projective variety. If $L$ is an ample and globally generated line bundle on $X$, we denote by 
$M_L$ the kernel (or syzygy) bundle associated to $L$, which, by definition, sits in the short exact sequence $0 \to M_L \to H^0(X, L) \otimes \OO_X \to L \to 0$. 
 A polarized abelian variety $(A, \l\,)$ is the datum of an abelian variety $A$ and  a polarization $\l \in \mathrm{Pic} A / \Pic0 A$, which is the class of an ample line bundle on $A$. Given a non-zero integer $b \in \Z$, we denote by $\mu_b \colon A \to A$ the isogeny sending $p \in A$ to $b p \in A$.

\section{A criterion for the $k$-jet very ampleness of abelian varieties}\label{S2} Let $X$ be a  projective variety,  $L$ and $P$ be line 
bundles on $X$, with $L$ ample and $P$ arbitrary. 
We have the following criterion proved by Ein, Lazarsfeld and Yang:
\begin{theorem}[\cite{einlazyang}, Remark 1.8]\label{elyjet}
Assume that $X$ is smooth and
 $H^1(X, P)=0$. Then, $P$ is $k$-jet very ample  if and only if  the vanishing
\[
H^1(X, M_{L_d}^{\otimes (k+1)} \tensor P) = 0
\]
holds true for $d \gg 0$, where $L_d := L^{\tensor d}$.
\end{theorem}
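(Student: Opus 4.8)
The plan is to establish the equivalence by induction on $k$, using the kernel-bundle sequence to reduce the single vanishing $H^1(X, M_{L_d}^{\otimes(k+1)} \otimes P) = 0$ to a chain of conditions anchored by the hypothesis $H^1(X,P)=0$. Write $M := M_{L_d}$ and $W := H^0(X, L_d)$, and recall the defining sequence
\[
0 \to M \to W \otimes \OO_X \to L_d \to 0,
\]
which is locally split and hence remains exact after tensoring by any sheaf. For $d \gg 0$ the bundle $L_d$ is globally generated (indeed $N$-jet very ample for any prescribed $N$), so $M$ is a vector bundle and Serre vanishing is at our disposal. I would first reformulate $k$-jet very ampleness of $P$ as surjectivity of $H^0(X,P) \to H^0(X, P \otimes \OO_Z)$ for every length-$(k+1)$ subscheme $Z$ with $\II_Z = \II_{x_1} \cdots \II_{x_{k+1}}$; the hypothesis $H^1(X,P)=0$ guarantees this is the sole obstruction to separation and settles the base case $k=0$. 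Indeed, tensoring the displayed sequence by $P$ and using $H^1(X,P)=0$ identifies $H^1(X, M \otimes P)$ with $\Coker\big(W \otimes H^0(X,P) \to H^0(X, L_d \otimes P)\big)$, whose vanishing for $d \gg 0$ is equivalent to global generation of $P$ (one implication by Serre vanishing applied to $M_P \otimes L_d$, the other by a base-point argument).

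For the inductive step I would tensor the displayed sequence by $M^{\otimes k} \otimes P$ and pass to cohomology. The term that must be controlled is $H^1(X, M^{\otimes k} \otimes L_d \otimes P)$: I expect the auxiliary vanishing $H^1(X, M^{\otimes i} \otimes L_d^{\otimes a} \otimes P) = 0$ for all $0 \le i \le k$, $a \ge 1$ and $d \gg 0$, proved by induction on $i$ via the same sequence, the case $i=0$ being Serre vanishing and each step reducing to surjectivity of a multiplication map for a high power of $L$ (standard Castelnuovo--Mumford regularity). Granting this, the long exact sequence collapses to an extension
\[
0 \to \Coker(m_k) \to H^1(X, M^{\otimes(k+1)} \otimes P) \to W \otimes H^1(X, M^{\otimes k} \otimes P) \to 0,
\]
where $m_k \colon W \otimes H^0(X, M^{\otimes k} \otimes P) \to H^0(X, M^{\otimes k} \otimes L_d \otimes P)$ is multiplication by sections of $L_d$. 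Thus $H^1(X, M^{\otimes(k+1)} \otimes P) = 0$ is equivalent to $H^1(X, M^{\otimes k} \otimes P) = 0$ together with $\Coker(m_k)=0$, and by the inductive hypothesis the first condition says exactly that $P$ is $(k-1)$-jet very ample.

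It remains to prove the crux: granting that $P$ is $(k-1)$-jet very ample, surjectivity of $m_k$ for $d \gg 0$ is equivalent to $k$-jet very ampleness of $P$. This is the step I expect to be the main obstacle, since it is where the separation geometry enters. The tensor power $M^{\otimes(k+1)}$ is tailored to ordered $(k+1)$-tuples of points --- it is the small-diagonal restriction of the external product $M_{L_d} \boxtimes \cdots \boxtimes M_{L_d}$ on $X^{k+1}$ --- and the content of the lemma is that evaluating sections of $M^{\otimes k} \otimes L_d \otimes P$ along such a tuple recovers precisely the map $H^0(X,P) \to H^0(X, P \otimes \OO_Z)$ for the associated length-$(k+1)$ cluster $Z$, with the already-assumed $(k-1)$-jet very ampleness accounting for all proper subclusters. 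Making this identification uniform over the whole family of clusters, and in particular over the non-reduced ones carrying infinitesimal (jet) data, is the delicate point; this is exactly where the method of Voisin and of Ein--Lazarsfeld--Yang does its work, and I would either invoke their universal evaluation argument on $X^{k+1}$ or carry out the corresponding direct d\'evissage cluster by cluster, the thickened configurations being the case requiring the most care.
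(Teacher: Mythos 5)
The paper itself offers no proof of this statement---it is quoted verbatim from Ein--Lazarsfeld--Yang \cite[Remark 1.8]{einlazyang}---so your attempt has to be measured against the source's argument. Your scaffolding is correct and is indeed the standard opening move: the kernel-bundle sequence tensored by $M_{L_d}^{\otimes k} \otimes P$, the base case $k=0$ (your two-sided argument there is fine), and the short exact sequence $0 \to \Coker(m_k) \to H^1(X, M_{L_d}^{\otimes (k+1)} \otimes P) \to W \otimes H^1(X, M_{L_d}^{\otimes k} \otimes P) \to 0$ obtained from the auxiliary vanishing. Two caveats on the scaffolding itself: the auxiliary vanishing $H^1(X, M_{L_d}^{\otimes i} \otimes L_d^{\otimes a} \otimes P) = 0$ cannot be dispatched by ``Serre vanishing'' as stated, because $M_{L_d}$ varies with $d$; one needs a regularity bound on $M_{L_d}$ that is uniform in $d$ (this is true and is proved by such uniform arguments in \cite{einlazyang}, but it is a lemma, not a remark). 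And note that your reduction, even granted, needs the crux step in \emph{both} directions, since the ``only if'' implication also passes through surjectivity of $m_k$.

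The genuine gap is exactly where you flag it, and flagging it does not close it: the equivalence, given $(k-1)$-jet very ampleness, between asymptotic surjectivity of $m_k \colon W \otimes H^0(X, M_{L_d}^{\otimes k} \otimes P) \to H^0(X, M_{L_d}^{\otimes k} \otimes L_d \otimes P)$ and $k$-jet very ampleness of $P$ is the entire content of the theorem, and proposing to ``invoke their universal evaluation argument on $X^{k+1}$'' is circular---that argument \emph{is} the proof of the statement being established. What is actually needed is the identification of $M_{L_d}^{\otimes (k+1)} \otimes P$ as a pushforward from $X \times X^{k+1}$ of $(P \boxtimes L_d^{\boxtimes (k+1)}) \otimes \II_{D_1} \cdots \II_{D_{k+1}}$, where $D_i$ is the diagonal pairing the distinguished factor with the $i$-th one; then, for $d \gg 0$, positivity of $L_d$ kills the relevant higher cohomology along the $X^{k+1}$-directions, and the vanishing $H^1(X, M_{L_d}^{\otimes (k+1)} \otimes P)=0$ becomes surjectivity of the universal evaluation $H^0(X,P) \otimes \OO_{X^{k+1}} \to \EE$, where $\EE$ has fibres $H^0(X, P \otimes \OO_X/\II_{x_1} \cdots \II_{x_{k+1}})$. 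The delicate point you correctly sense but do not address is that $\OO_X/\II_{x_1}\cdots\II_{x_{k+1}}$ is \emph{not} flat over $X^{k+1}$ (its length jumps when points collide---e.g.\ $\II_x^2$ has colength $1+\dim X$, not $2$), which is precisely why the product-of-ideals formulation, the smoothness of $X$, and the hypothesis $H^1(X,P)=0$ enter, and why the smooth statement gives $k$-jet very ampleness while Agostini's singular variant (Theorem \ref{agocrit}) only yields $k$-very ampleness in one direction. Without carrying out this pushforward/base-change analysis, your induction establishes only the easy reduction, not the theorem.
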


Going back to the case of a polarized abelian variety $(A, \l\, )$, we want to use  Theorem \ref{elyjet}  to relate the following invariant, introduced in \cite{jipa},  with the notion of $k$-jet very  ampleness. 
Let us start by recalling a terminology introduced by Mukai \cite{mukai}. A coherent sheaf $\FF$  on $A$ is said to be $IT(0)$, if 
\[
H^i(A, \FF \otimes \alpha) = 0
\]
for all $i > 0$ and $\alpha \in \Pic0 A$.
\begin{definition}[\cite{jipa}]
The \emph{basepoint-freeness threshold} of a polarized abelian variety $(A, \l\, )$ is
\[
\beta(A, \l\, ) := \mathrm{Inf} \{r = \frac{a}{b} \in \Q^+ \ |\ \mu_b^* \II_0 \otimes L^{\otimes ab}\ \textrm{is\ $IT(0)$} \},
\]
where $\II_0 \subseteq \OO_A$ is the ideal sheaf of $0 \in A$ and $L$ represents the class $\l$.\footnote{It is not difficult to see that the definition does not depend on the represantation $r = \frac{a}{b}$.}
\end{definition}
 Jiang and Pareschi (see \cite[p.\ 842]{jipa}) showed that $\beta(A, \l\, ) \leq 1$, and one has the strict inequality if and only if $L$ is globally generated. Moreover, $\beta(A, m \l\, ) = \frac{\beta(A,\, \l\, )}{m}$ for all integers $m \geq 1$.

\begin{proof}[Proof of Theorem \ref{thm1}]
Let $L$ be an ample line bundle on $A$ which represents $\l$. Note that $P := L \tensor N$ is  ample, hence $H^1(A, P)=0$. 
By using the $\Q$-twisted notation (see \cite[Remark 2.2]{jipa}), for any $d \gg 0$ (actually, $d \geq 2$ suffices) we may write
\begin{equation}\label{ratwriting}
M_{L_d}^{\otimes (k+1)} \tensor\, P = M_{L_d}^{\otimes (k+1)} \tensor\, L\, \tensor\, N 
                        = \bigl(M_{L_d} \la \frac{1}{d(k+1)} \l_d \ra \bigr)^{\otimes (k+1)} \tensor\, N,
\end{equation}
where $\l_d := d\l$ denotes the class of $L_d$.
By \cite[Theorem D]{jipa} (see also \cite[Lemma 3.3]{ca}), the $\Q$-twisted sheaf $M_{L_d} \la \frac{1}{d(k+1)} \l_d \ra$ is $IT(0)$ (i.e., $\mu_{d(k+1)}^*M_{L_d} \otimes L_d^{\otimes d(k+1)}$ is so\footnote{The reason bringing to this definition is that, if $\FF\la \frac{a}{b} \l\, \ra$ is a $\Q$-twisted sheaf, then
$\mu_b^*(\FF\la \frac{a}{b} \l\, \ra) = \mu_b^* \FF \la \frac{a}{b} \mu_b^* \l\, \ra = \mu_b^* \FF \la \frac{a}{b} \cdot b^2 \l\, \ra$,  
and the last term is an actual sheaf.}) if and only if  
\begin{equation}\label{k1}
\frac{\beta(A, \l_d )}{1 - \beta(A, \l_d )} = \frac{\beta(A, \l\, )}{d-\beta(A, \l\, )} < \frac{1}{d(k+1)}.
\end{equation}
 Since \eqref{k1} is equivalent to
\begin{equation}\label{k2}
\beta(A, \l\, ) < \frac{d}{1+d(k+1)}
\end{equation} 
and the right-hand side of \eqref{k2} grows to $\frac{1}{k+1}$ as $d$ goes to $+\infty$, and  since we are assuming that $\beta(A, \l\, ) < \frac{1}{k+1}$, it is certainly possible to take $d$ big enough such that \eqref{k2} is satisfied. Therefore,   $M_{L_d} \la \frac{1}{d(k+1)} \l_d \ra$ is $IT(0)$ if $d \gg 0$.

In order to conclude the proof, we just need to observe that,
when $d \gg 0$,   in \eqref{ratwriting}
  we have written  the sheaf $M_{L_d}^{\otimes (k+1)} \tensor\, P$ as a tensor product of  $\Q$-twisted $IT(0)$ sheaves, tensored with a nef line bundle. So,  $M_{L_d}^{\otimes (k+1)} \tensor\, P$  is  $IT(0)$ by the preservation of vanishing principle (see  \cite[Proposition 3.4]{ca} which is based on \cite[Proposition 3.1]{papoIII}\footnote{Note that, since $N$ is a nef line bundle on an abelian variety,  $N\la x\l\,\ra$ is $IT(0)$ for any $x \in \Q^+$, that is, $N$ is a $GV$ (generic vanishing) sheaf. This is not the usual definition of a $GV$ sheaf, but it is equivalent to it by \cite[Theorem 5.2(a)]{jipa}.}).  
This gives
$H^1(A, M_{L_d}^{\otimes (k+1)} \tensor P) = 0$
for $d \gg 0$. So, Ein-Lazarsfeld-Yang characterization (Theorem \ref{elyjet}) implies that the line bundle $P$ is $k$-jet very ample. 
\end{proof}
\begin{remark}
The above proof was contained in the author's Phd thesis. More recently,  
Ito  found a slightly finer result, with a different technique, when $k \geq 1$ (see \cite[Theorem 1.6]{itoM}).
\end{remark}

\begin{corollary}\label{corlocpos}
If $\beta(A, \l\, ) < \frac{m}{k+1}$, then $L^{\otimes m}$ is $k$-jet very ample. In particular, $L^{\otimes m}$ is $k$-jet very ample whenever $m \geq k + 2$. 
\end{corollary}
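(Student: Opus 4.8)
The plan is to deduce both assertions directly from Theorem \ref{thm1} by rescaling the polarization, with no new input beyond the scaling behaviour of $\beta$. First I would observe that the line bundle $L^{\otimes m}$ represents the polarization $m\l$. Applying Theorem \ref{thm1} to the polarized abelian variety $(A, m\l\,)$, with the nef line bundle there taken to be the trivial bundle $\OO_A$, shows that $L^{\otimes m}$ is $k$-jet very ample as soon as $\beta(A, m\l\,) < \frac{1}{k+1}$.

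Next I would invoke the scaling property $\beta(A, m\l\,) = \frac{\beta(A,\l\,)}{m}$ recalled in \S\ref{S2}. This rewrites the condition $\beta(A, m\l\,) < \frac{1}{k+1}$ as $\frac{\beta(A,\l\,)}{m} < \frac{1}{k+1}$, which is equivalent to $\beta(A,\l\,) < \frac{m}{k+1}$. Since the latter is exactly the hypothesis, the first assertion follows at once.

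For the \emph{in particular} clause, I would use the universal bound $\beta(A,\l\,) \leq 1$, valid for every polarized abelian variety (also recalled in \S\ref{S2}). When $m \geq k+2$ one has $\frac{m}{k+1} \geq \frac{k+2}{k+1} > 1 \geq \beta(A,\l\,)$, so the inequality $\beta(A,\l\,) < \frac{m}{k+1}$ is automatic and the first part applies. As every step is a mere substitution, there is no genuine obstacle here; the only point deserving attention is that the final inequality be strict, which is ensured by the strict bound $\frac{k+2}{k+1} > 1$.
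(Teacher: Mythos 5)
Your proof is correct and follows essentially the same route as the paper: apply Theorem \ref{thm1} to $(A, m\l\,)$ (with $N = \OO_A$) using the scaling property $\beta(A, m\l\,) = \frac{\beta(A,\,\l\,)}{m}$, and deduce the \emph{in particular} clause from the universal bound $\beta(A, \l\,) \leq 1$ together with $\frac{k+2}{k+1} > 1$. Your explicit attention to the strictness of the final inequality is a nice touch, but there is nothing substantively different from the paper's argument.
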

\begin{proof}
Since
$\beta(A, m\l\, ) = \frac{\beta(A,\, \l\, )}{m} < \frac{1}{k+1}$,
$L^{\otimes m}$ is $k$-jet very ample by Theorem \ref{thm1}. The last statement follows as, in any case, $\beta(A, \l\, ) \leq 1$  holds true.
\end{proof}

Defining $t(\l\,) := \max \{ t \in \mathbb{N} \ |\ \beta(A, \l\, ) \leq \frac{1}{t} \}$,
we have, more in general,  the following 
\begin{proposition}\label{posmult}
Let $k \geq 0, m \geq 2$ be integers, and  $L$ be a basepoint-free line bundle on $A$. Then, $L^{\otimes m}$ is $k$-jet very ample if $m \geq k+2-t(\l\,)$.
\end{proposition}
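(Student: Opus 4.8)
The plan is to reduce the statement to Corollary \ref{corlocpos}. Writing $\beta := \beta(A,\l\,)$ and $t := t(\l\,)$, where $\l$ is the polarization class represented by $L$, I claim it is enough to prove the purely numerical inequality
\[
\beta < \frac{m}{k+1};
\]
granting this, Corollary \ref{corlocpos} immediately gives that $L^{\otimes m}$ is $k$-jet very ample. So the entire content of the proposition is an elementary comparison of $\beta$ with $\frac{m}{k+1}$, fed by two bounds.

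Two bounds on $\beta$ drive this comparison. On the one hand, by the very definition of $t = t(\l\,)$ one has $\beta \le \frac{1}{t}$, and $t$ is a well-defined positive integer; the degenerate case $\beta = 0$ (where $t$ would be infinite) is trivial, since then $\beta < \frac{m}{k+1}$ automatically, so we may assume $\beta > 0$. On the other hand, since $L$ is basepoint-free, the Jiang–Pareschi characterization of global generation recalled in \S\ref{S2} gives the \emph{strict} bound $\beta < 1$. It is this strictness that will rescue the extremal cases.

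First I would treat the range $m \ge k+1$: there $\frac{m}{k+1} \ge 1 > \beta$, so the desired inequality holds at once; note this already settles the case $t = 1$, in which the hypothesis reads $m \ge k+1$. There remains the range $2 \le m \le k$, forcing $k \ge 2$ and, through $m \ge k+2-t$, also $t \ge k+2-m \ge 2$. If $t \ge k+1$, then $\beta \le \frac{1}{t} \le \frac{1}{k+1} < \frac{2}{k+1} \le \frac{m}{k+1}$, using $m \ge 2$. If instead $2 \le t \le k$, I would combine $m \ge k+2-t$ with the identity
\[
(k+2-t)\,t-(k+1)=(t-1)(k-t+1),
\]
whose right-hand side is at least $1$ because $t-1 \ge 1$ and $k-t+1 \ge 1$; this yields $mt \ge (k+2-t)\,t > k+1$, that is $\frac{1}{t} < \frac{m}{k+1}$, hence again $\beta \le \frac{1}{t} < \frac{m}{k+1}$. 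In all cases $\beta < \frac{m}{k+1}$, and Corollary \ref{corlocpos} concludes.

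The only delicate point—and the main obstacle—is the strict-versus-nonstrict distinction at the extremal value $m = k+2-t$. The crude estimate $\beta \le \frac{1}{t}$ by itself yields merely $\beta \le \frac{m}{k+1}$ there (most visibly in the corner $t=1$, $m=k+1$, where $\frac{m}{k+1}=1$), which is too weak to invoke the Corollary. Strictness must instead be extracted from one of three sources according to the regime: from basepoint-freeness ($\beta < 1$) when $m \ge k+1$, from the slack $m \ge 2$ when $t \ge k+1$, and from the positivity of $(t-1)(k-t+1)$ when $2 \le t \le k$. Arranging the case division so that each regime produces a genuinely strict inequality is essentially all there is to the argument.
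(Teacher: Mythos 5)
Your proposal is correct and follows essentially the same route as the paper: both reduce, via Theorem \ref{thm1} (equivalently Corollary \ref{corlocpos} and the scaling $\beta(A,m\l\,)=\beta(A,\l\,)/m$), to the numerical inequality $\beta(A,\l\,)<\frac{m}{k+1}$, and both verify it case by case using $\beta\le\frac{1}{t(\l\,)}$, the strictness $\beta<1$ from basepoint-freeness in the $t(\l\,)=1$ regime, the slack $m\ge 2$ when $t(\l\,)\ge k+1$, and the same quadratic factorization $(t-1)(k+1-t)$ in the middle range. The only difference is cosmetic --- you organize the cases by the size of $m$ rather than of $t(\l\,)$ --- so there is nothing to add.
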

\begin{proof}
If $t(\l\,) = 1$, then $m \geq k+1$ and, hence, the statement follows from Corollary \ref{corlocpos} as $L$ is basepoint-free. 
Let us assume $t(\l\,) > 1$. 
By Theorem \ref{thm1}, it suffices to show that $\beta(A, m\l\, ) < \frac{1}{k+1}$. If $k \geq t(\l\,)$, 
then
\begin{equation*}
\beta(A, m\l\, ) = \frac{\beta(A, \l\, )}{m} \leq \frac{1}{m \cdot t(\l\,)}	\leq \frac{1}{(k+2-t(\l\,)) t(\l\,)} < \frac{1}{k+1},
\end{equation*} 
where the last inequality is equivalent to
$t(\l\,)^2 - (k+2)t(\l\,) + k+1 < 0$ which  is satisfied if and only if $1 < t(\l\,) < k + 1$. So, we have done in this case.
If $t(\l\,) \geq k+1$, then 
\[
\beta(A, m\l\, ) = \frac{\beta(A, \l\, )}{m} \leq \frac{1}{m \cdot t(\l\,)}	\leq \frac{1}{2(k+1)} < \frac{1}{k+1},
\]
as $m \geq 2$.
\end{proof}

The main result of \cite{ca} established a  relation (formally similar to the one stated in Theorem \ref{thm1}) between $\beta(A, \l\, )$ and the syzygies of $(A, \l\,)$. Based on this fact, a certain interest arises in bounding from above the basepoint-freeness threshold. Some quite general results were obtained, in particular, by Ito \cite{ito3} and Jiang \cite{ji}. What follows is a direct consequence of these and our result above.

\begin{proposition}
Let $(A, \l\, )$ be a polarized abelian variety. Let $L$ be an ample line bundle representing $\l\,$. 
If:

\noindent (i) $\dim A \leq 3$ and $(L^{\dim B} \cdot B) > ((k+1)\dim B)^{\dim B}$ for any non-zero abelian subvariety $B \subseteq A$;

or

\noindent (ii) $\dim A \geq  4$ and $(L^{\dim B} \cdot B) > (2 (k+1) \dim B)^{\dim B}$ for any non-zero abelian subvariety $B \subseteq A$,

\noindent then $L$ is $k$-jet very ample.
\end{proposition}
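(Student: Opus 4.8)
The plan is to deduce the statement directly from Theorem \ref{thm1}, using the existing upper bounds on the basepoint-freeness threshold due to Ito \cite{ito3} and Jiang \cite{ji}. Since Theorem \ref{thm1} produces $k$-jet very ampleness of $L \otimes N$ for \emph{any} nef $N$, I would first take $N = \OO_A$ (which is nef), so that it suffices to verify the single numerical inequality
\[
\beta(A,\l\,) < \frac{1}{k+1}
\]
under either hypothesis (i) or hypothesis (ii). Thus the whole proposition is reduced to estimating $\beta(A,\l\,)$ from above.

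The key input is that Ito and Jiang bound $\beta(A,\l\,)$ in terms of the intersection numbers $(L^{\dim B}\cdot B)$ as $B$ ranges over the non-zero abelian subvarieties of $A$. Concretely, I would invoke Ito's estimate \cite{ito3}, available for $\dim A \leq 3$, of the shape
\[
\beta(A,\l\,) \leq \max_{0 \neq B \subseteq A}\ \frac{\dim B}{(L^{\dim B}\cdot B)^{1/\dim B}},
\]
together with Jiang's estimate \cite{ji}, valid in arbitrary dimension but carrying an extra factor of $2$ on the right-hand side; for $\dim A \geq 4$ one is forced to fall back on Jiang's weaker bound, since Ito's sharper one is not available. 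Both bounds are controlled by the \emph{worst} abelian subvariety, i.e.\ the one on which $L$ is least positive, which is exactly why the hypotheses must quantify over \emph{all} non-zero $B$.

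The remaining step is purely formal. Fixing a non-zero abelian subvariety $B$ and writing $n := \dim B$, the inequality $\frac{n}{(L^{n}\cdot B)^{1/n}} < \frac{1}{k+1}$ is, after raising both sides to the power $n$, equivalent to $(L^{n}\cdot B) > ((k+1)n)^{n}$, which is precisely hypothesis (i); similarly $\frac{2n}{(L^{n}\cdot B)^{1/n}} < \frac{1}{k+1}$ is equivalent to $(L^{n}\cdot B) > (2(k+1)n)^{n}$, which is hypothesis (ii). Since the relevant inequality is assumed for every non-zero $B$, each term of the maximum is strictly below $\frac{1}{k+1}$, hence so is the maximum, and therefore the Ito (case $\dim A \leq 3$) or Jiang (case $\dim A \geq 4$) bound yields $\beta(A,\l\,) < \frac{1}{k+1}$. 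Theorem \ref{thm1} then gives that $L$ is $k$-jet very ample.

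I do not expect any genuine obstacle: as the surrounding text already signals, this is a \emph{direct consequence} of Theorem \ref{thm1} and the results of Ito and Jiang, and the only real content is the elementary matching of constants via the $\dim B$-th root. The one point deserving a little care is the dimension split and the factor $2$: it must be stated clearly that the presence of $2$ in (ii) is not an artifact but reflects the necessity of using Jiang's general bound in dimension $\geq 4$, whereas in dimensions $\leq 3$ Ito's optimal estimate removes it.
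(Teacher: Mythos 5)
Your proposal is correct and takes essentially the same route as the paper: both reduce the statement, via Theorem \ref{thm1} (with $N = \OO_A$), to the single inequality $\beta(A, \l\,) < \frac{1}{k+1}$, which is then supplied by the known upper bounds on the basepoint-freeness threshold. The only difference is bibliographic bookkeeping: the paper attributes the case $\dim A \leq 2$ to \cite[p.~950]{ca} and only $\dim A = 3$ to Ito \cite[Theorem 1.2]{ito3} (with Jiang \cite[Theorem 1.4]{ji} for $\dim A \geq 4$), and those results are stated directly in the implication form ``intersection inequalities imply $\beta < \frac{1}{k+1}$,'' so your elementary matching of constants is a re-derivation rather than an extra needed step.
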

\begin{proof}
If $\dim A \leq 2$ (resp.\ $\dim A = 3$), the first set of inequalities implies that $\beta(A, \l\, ) < \frac{1}{k+1}$ by \cite[p.\ 950]{ca} (resp.\ \cite[Theorem 1.2]{ito3}). Even if it is expected that the same holds in any dimension, at the moment of writing the best general result is due to Jiang \cite[Theorem 1.4]{ji} who, using some birational geometry methods, proved that  $\beta(A, \l\, ) < \frac{1}{k+1}$, if the stronger inequalities in $(ii)$ are satisfied. Then, it suffices to apply Theorem \ref{thm1}.
\end{proof}

\section{Higher order embeddings of Kummer varieties}

Ein-Lazarsfeld-Yang result concerns smooth projective varieties (see however \cite[Remark 1.3]{einlazyang}).
For  possibly singular varieties, Agostini recently proved a similar -- although  weaker -- criterion  (see \cite[Theorem A and Proposition 2.2]{ag}):
\begin{theorem}[\cite{ag}]\label{agocrit}
Let $X$ be a projective variety, and $P$ be a line bundle on $X$. If 
\[
H^1(X, M_{L_d}^{\otimes (k+1)} \otimes P) = 0
\]
for $d \gg 0$,
 then $P$ is $k$-very ample.
\end{theorem}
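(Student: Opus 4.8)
The plan is to test $k$-very ampleness directly against length-$(k+1)$ subschemes and to feed the single cohomological hypothesis into that test through the evaluation sequence of $L_d$. Recall that $P$ is $k$-very ample precisely when, for every $0$-dimensional $Z \subseteq X$ with $\mathrm{length}(Z) = k+1$, the restriction $H^0(X,P) \to H^0(X, P \otimes \OO_Z)$ is surjective. So I would fix one such $Z$ and aim to produce this surjectivity out of the vanishing $H^1(X, M_{L_d}^{\otimes(k+1)} \otimes P)=0$, choosing $d$ as large as convenient: the hypothesis is granted for all $d \gg 0$, and for a \emph{fixed} $Z$ we are free to enlarge $d$ as much as the positivity arguments below require.

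First I would record the positivity available for $d \gg 0$. Large powers of an ample line bundle are $k$-very ample, so $L_d = L^{\otimes d}$ separates $Z$ and, by Castelnuovo--Mumford regularity, $\II_Z \otimes L_d$ is globally generated; likewise $L_d \otimes P$ separates $Z$. Hence $W := H^0(X, \II_Z \otimes L_d) \subseteq H^0(X, L_d)$ has codimension exactly $k+1$ (the length of $Z$), and the evaluation $W \otimes \OO_X \to \II_Z \otimes L_d$ is surjective, with kernel a ``restricted'' syzygy sheaf $M_W$. Heuristically, each of the $k+1$ tensor factors of $M_{L_d}$ should absorb one unit of the length-$(k+1)$ condition, which is the structural reason the exponent $k+1$ is the correct one.

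The heart of the argument is the comparison between the full kernel bundle $M_{L_d}$ and $M_W$. Tensoring the defining sequence $0 \to M_{L_d} \to H^0(X,L_d)\otimes\OO_X \to L_d \to 0$ with $M_{L_d}^{\otimes k} \otimes P$ and taking cohomology turns the hypothesis $H^1(X, M_{L_d}^{\otimes(k+1)}\otimes P)=0$ into surjectivity of a multiplication-type map $H^0(X,L_d)\otimes H^0(X, M_{L_d}^{\otimes k}\otimes P) \to H^0(X, M_{L_d}^{\otimes k}\otimes L_d \otimes P)$. In parallel, a snake-lemma comparison of the two evaluation sequences (for $H^0(X,L_d)\otimes\OO_X \to L_d$ and for $W\otimes\OO_X \to \II_Z\otimes L_d$) exhibits $M_W \hookrightarrow M_{L_d}$ with discrepancy governed by $\OO_Z$; for instance when $k=0$ and $Z$ is a reduced point one gets exactly $M_{L_d}/M_W \cong \II_Z$. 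Assembling these into a single commutative diagram whose top row is the $(k+1)$-fold tensor sequence and whose restriction along $Z$ reads off $P \otimes \OO_Z$, I would peel off the factors of $L_d$ one at a time and land on the evaluation $H^0(X,P) \to H^0(X, P\otimes\OO_Z)$, concluding its surjectivity.

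The step I expect to be the main obstacle is precisely this \emph{descent on $Z$}: transferring surjectivity of the global multiplication maps down to surjectivity of $H^0(X,P)\to H^0(X, P\otimes\OO_Z)$. Since $X$ is only assumed projective, $M_{L_d}$ need not be locally free and one must argue with the possibly singular sheaf $\OO_Z$ (controlling the relevant $\mathrm{Tor}$-terms along $\Supp Z$), and since $Z$ may be non-reduced the comparison between $M_{L_d}$ and $M_W$ must be pinned down as an isomorphism away from $Z$ together with precise information along $\Supp Z$. This is also where the weakening relative to Ein--Lazarsfeld--Yang (Theorem \ref{elyjet}) comes from: dropping smoothness and the normalization $H^1(X,P)=0$, one obtains only the implication rather than an equivalence, and $k$-very ampleness rather than $k$-jet very ampleness.
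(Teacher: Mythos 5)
First, a point of order: the paper never proves Theorem \ref{agocrit} --- it is imported verbatim from Agostini (\cite[Theorem A and Proposition 2.2]{ag}), so the benchmark is Agostini's argument, not anything in this note. Measured against that, your scaffolding is the standard one and is essentially correct: fixing a single length-$(k+1)$ scheme $Z$ and letting $d \gg 0$ depend on $Z$ is legitimate (the hypothesis holds for \emph{all} large $d$); $W = H^0(X, \II_Z \otimes L_d)$ has codimension exactly $k+1$ for such $d$; the snake-lemma comparison giving $M_{L_d}/M_W \cong \II_x$ for a reduced point is right; and the hypothesis does translate, via the twisted evaluation sequence, into surjectivity of $H^0(L_d) \otimes H^0(M_{L_d}^{\otimes k} \otimes P) \to H^0(M_{L_d}^{\otimes k} \otimes L_d \otimes P)$. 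But the proof stops exactly where the theorem begins. The step you defer --- ``peel off the factors of $L_d$ one at a time'' and descend to $H^0(P) \to H^0(P \otimes \OO_Z)$ --- is the entire content, and as described it does not run for $k \geq 1$: the hypothesis hands you \emph{one} surjective multiplication map, and each further peeling stage would require fresh input such as $H^1(X, M_{L_d}^{\otimes j} \otimes P \otimes L_d^{\otimes m}) = 0$ for $j \leq k$, $m \geq 1$, or separation of $Z$ by sections of $M_{L_d}^{\otimes k} \otimes L_d \otimes P$. None of this follows from $H^1(M_{L_d}^{\otimes (k+1)} \otimes P) = 0$: the long exact sequence only squeezes $H^1(M_{L_d}^{\otimes k} \otimes L_d \otimes P)$ between $H^0(L_d) \otimes H^1(M_{L_d}^{\otimes k} \otimes P)$ and $H^2(M_{L_d}^{\otimes (k+1)} \otimes P)$, both uncontrolled. (Only the case $k=0$ closes as you suggest: a section of $L_d \otimes P$ nonvanishing at $x$, decomposed under the surjective multiplication map, yields a section of $P$ nonvanishing at $x$.)

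The actual argument in \cite{ag} runs the contrapositive and is constructive where yours is heuristic: assuming some $Z$ of length $k+1$ is not separated by $P$, one chooses a flag $Z_1 \subset \cdots \subset Z_{k+1} = Z$ with $\mathrm{length}\, Z_i = i$ and $\II_{Z_{i-1}}/\II_{Z_i} \cong \C_{x_i}$; for $d \gg 0$ the kernel sheaves $M_{W_i}$, $W_i = H^0(\II_{Z_i} \otimes L_d)$, sit in exact sequences $0 \to M_{W_i} \to M_{W_{i-1}} \to \II_{x_i} \to 0$, producing surjections $M_{L_d} \twoheadrightarrow \II_{x_i}$ and hence $M_{L_d}^{\otimes (k+1)} \twoheadrightarrow \II_{x_1} \cdots \II_{x_{k+1}} \subseteq \II_Z$; Serre vanishing for the pieces of this filtration then shows that the obstruction class of a non-liftable section of $P \otimes \OO_Z$ survives as a \emph{nonzero} element of $H^1(M_{L_d}^{\otimes (k+1)} \otimes P)$ for all large $d$, contradicting the hypothesis. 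Your intuition that ``each tensor factor absorbs one unit of length'' is exactly realized by this surjection onto the product ideal, but via nonvanishing, not via direct descent --- and note this is also why the conclusion is $k$-very ampleness rather than $k$-jet very ampleness (one only gets $\II_{x_1} \cdots \II_{x_{k+1}} \subseteq \II_Z$, not equality). Finally, one slip worth correcting: $M_{L_d}$ \emph{is} locally free on any projective variety, being the kernel of a surjection from a trivial bundle onto a line bundle; the non-locally-free objects are $M_W$ and the ideal-theoretic quotients, and smoothness in Ein--Lazarsfeld--Yang (Theorem \ref{elyjet}) is used for the jets statement and the converse direction, not to make $M_{L_d}$ a bundle.
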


We plan to apply it to the Kummer variety $K(A)$ associated to an abelian variety $A$. More precisely,
if $m$ is an integer and $L$ (resp.\ $N$) is an ample (resp.\ nef) line bundle on $K(A)$, we aim to prove that $H^1(K(A), M_{L_d}^{\otimes (k+1)} \otimes L^{\otimes m} \otimes N) = 0$, if  $m > \frac{k+1}{2}$ and $d \gg 0$.
\begin{proof}[Proof of Theorem \ref{thm2}]
Let $\pi \colon A \to K(A)$ be the quotient morphism. Since $\OO_{K(A)}$ is a direct summand of $\pi_*\OO_A$, one has 
\[
H^1(K(A), M_{L_d}^{\otimes (k+1)} \otimes L^{\otimes m} \otimes N) \subseteq H^1(A, \pi^*(M_{L_d}^{\otimes (k+1)} \otimes L^{\otimes m} \otimes N)),
\]
where in the right-hand side we used the projection formula and the fact that $\pi$ is a finite morphism.
 Then, it suffices to prove that $H^1(A, \pi^*(M_{L_d}^{\otimes (k+1)} \otimes L^{\otimes m} \otimes N)) = 0$ when $m > \frac{k+1}{2}$ and $d \gg 0$.
Let us write
\begin{equation*}
\begin{split}
\pi^*(M_{L_d}^{\otimes (k+1)} \otimes L^{\otimes m} \otimes N) &= \big( \pi^*M_{L_d} \la \frac{m}{k+1} \pi^*\l\, \ra \big)^{\otimes (k+1)} \otimes \pi^* N \\
&= \big( \pi^*M_{L_d} \la \frac{m}{d(k+1)} \pi^*\l_d \ra \big)^{\otimes (k+1)} \otimes \pi^* N.
\end{split}
\end{equation*}
From \cite[Proof of Lemma 5.4]{caK}, we know 
that the $\Q$-twisted sheaf $\pi^*M_{L_d} \la \frac{m}{d(k+1)} \pi^* \l_{d} \ra$ is $IT(0)$, if the inequality
\[
\frac{1}{2d} < \frac{\frac{m}{d(k+1)}}{1+ \frac{m}{d(k+1)}} = \frac{m}{d(k+1) + m}
\] 
holds true, i.e., if $(k+1)d + m < 2md$. When $d \gg 0$, this is the case if $2m > k+1$, which is precisely what we are assuming by the hypothesis.
Then, since $\pi^* N$ is a nef line bundle on $A$, we just need to apply, as before, the preservation of vanishing to get that $\pi^*(M_{L_d}^{\otimes (k+1)} \otimes L^{\otimes m} \otimes N)$ is an $IT(0)$ sheaf.
\end{proof}

\providecommand{\bysame}{\leavevmode\hbox
to3em{\hrulefill}\thinspace}

\end{document}